\theoremstyle{plain}
\newtheorem{thm}{Theorem}
\newtheorem{lemma}{Lemma}
\newtheorem{example}{Example}[section]
\theoremstyle{definition}
\theoremstyle{remark}
\newtheorem{remark}[thm]{Remark}
\newcommand{\fq}{\mathbb{F}_q}
\newcommand{\E}{\mathbf{E}}
\newcommand{\ecq}{\E(\fq)}
\newcommand{\ffq}{\fq(\E)}
\newcommand{\C}{\mathbf{C}}
\newcommand{\ffC}{\fq(\C)}
\begin{document}
\title{On the linear complexity profile of some sequences derived from elliptic curves}
\author{
L\'aszl\'o M\'erai, 
Arne Winterhof\\
{\small Johann Radon Institute for Computational and Applied Mathematics}\\
{ \small Austrian Academy of Sciences}\\
{ \small Altenbergerstr.\ 69, 4040 Linz, Austria}\\
{ \small e-mail: merai@cs.elte.hu, arne.winterhof@oeaw.ac.at}}
\maketitle

\begin{abstract}

For a given elliptic curve $\E$ over a finite field of odd characteristic and a rational function $f$ on $\E$ we first study the linear complexity profiles of the sequences 
$f(nG)$, $n=1,2,\dots$ which complements earlier results of Hess and Shparlinski. We use Edwards coordinates to be able to deal with many $f$ where Hess and Shparlinski's result does not apply.
Moreover, we study the linear complexities of the (generalized) elliptic curve power generators $f(e^nG)$, $n=1,2,\dots$
We present large families of functions $f$ such that the linear complexity profiles of these sequences are large.
\end{abstract}

\textit{2000 Mathematics Subject Classification: Primary} 65C10, 14H52, 94A55, 94A60

\textit{Key words and phases:} linear complexity, elliptic curves, Edwards coordinates, elliptic curve generator, power generator, elliptic curve power generator

\let\thefootnote\relax\footnote{The final publication is available at Springer via \url{http://dx.doi.org/10.1007/s10623-015-0140-0}}

\section{Introduction}

The \textit{linear complexity profile} $L(s_n, N)$, $N=1,2,\dots$, of a sequence $(s_n)$ over a ring $R$ is a non-decreasing sequence where the $N$-th term is defined as the length $L$ of a shortest linear recurrence relation
\begin{equation*}
 s_{n+L}=c_{L-1}s_{n+L-1}+\dots +c_1s_{n+1}+c_0s_n, \quad 0\leq n\leq N-L-1
\end{equation*}
for some $c_0,\dots, c_{L-1}\in R$, that $(s_n)$  satisfies,  
with the convention that $L(s_n,N)=0$ if the first $N$ elements of $(s_n)$ are all zeros, and $L(s_n,N)=N$ if $s_0=\dots=s_{N-2}=0$ and $s_{N-1}\neq 0$. The value
\[
 L(s_n)=\sup_{N\geq 1} L(s_n,N)
\]
is called the \textit{linear complexity} of the sequence $(s_n)$.

The linear complexity profile measures the unpredictability of a sequence and thus its suitability in cryptography. 
For more details see \cite{MeidlWinterhof, Niederreiter2003, Winterhof2010}.

A common method for generating pseudorandom sequences is the \textit{linear method}. Namely, for integers $a,b,m$ and $\vartheta$ with $\gcd(a,m)= \gcd(\vartheta ,m)=1$  we define the sequence $(x_n)$ as
\begin{equation}\label{eq:linear}
 x_{n}\equiv ax_{n-1}+b \pmod m ,\quad 0\leq x_n<m, \quad n=1,2,\dots
\end{equation}
with the initial value $x_0=\vartheta$, see \cite{knuth, NW}. Although linear generators have many applications including Monte-Carlo integration, they have linear complexity profile $L(x_n,N)\leq 2$; so they are highly predictable and thus unsuitable in cryptography.

A more adequate method for cryptographic applications is the \textit{power generator}. Namely, let $\vartheta$, $m$ and $e$ be integers such that $\gcd (\vartheta,m)=1$. Then one can define the sequence $(u_n)$ by the recurrence relation
\begin{equation}\label{eq:power}
 u_n \equiv u_{n-1}^{e} \pmod m, \quad 0\leq u_n <m, \quad n=1,2,\dots
\end{equation}
with the initial value $u_0=\vartheta$.

Two special cases of the power generator (both for $m=pq$ the product of two primes) are the \textit{RSA generator}, when $\gcd (e,\varphi(m))=1$, where $\varphi$ is the Euler function, and the \textit{Blum-Blum-Shub generator} (or \textit{square generator}), when $e=2$. The linear complexities of these generator were studied by Griffin and Shparlinski \cite{GriffinShparlinski}, and Shparlinski \cite{shparlinski2001}.

For more background on pseudorandom number generation we refer to the survey articles \cite{TopWinterhof,Winterhof-SETA2010} and the monographs \cite{N,NW}.

In this paper we study the linear complexities of the elliptic curve analogues of the sequences defined by \eqref{eq:linear} and \eqref{eq:power}.

In Section~\ref{sec:EC} we summarize some basic facts about elliptic curves. In Section~\ref{sec:main}, 
we define the elliptic curve generators and elliptic curve power generators with respect to a rational function of the curve. 
Next, by using Edwards coordinates we state a complement to a result of Hess and Shparlinski \cite{HessShparlinski} for a large family of functions where \cite[Theorem~4]{HessShparlinski} is not applicable.
Then we present an extension of a result of Lange and Shparlinski \cite{LangeShparlinski} on the linear complexity  of the elliptic curve power generator defined via the first coordinate
to analogues defined via more general rational functions. Finally,  in Sections~\ref{sec:proof-con} and \ref{sec:proof-power} we present the proofs.

We emphasize the two new ideas in the proofs compared to \cite{HessShparlinski,LangeShparlinski}. First, the method of \cite{HessShparlinski} fails if a certain pole divisor on the elliptic curve is not of a very special form, see Section \ref{sec:con} for more details. However, if we use Edwards coordinates some different but rather mild conditions have to be satisfied. Consequently, we can deal with many more functions not covered by \cite{HessShparlinski}. Secondly, a more general linear independence property than in \cite{LangeShparlinski} from \cite{merai-ec-power} is used to extend the results of \cite{LangeShparlinski}.

We use the notation $A(x)\ll B(x)$ or $B(x)\gg A(x)$ if $|A(x)|\leq c B(x)$ holds for some positive constant $c$.

\section{Elliptic curves}\label{sec:EC}

Let $\fq$ be the finite field of $q$ elements with a prime power $q$,
and let $\E$ be an elliptic curve defined by the \textit{Weierstrass equation}
\[
 y^2+(a_1x +a_3)y=x^3+a_2x^2+a_4x+a_6
\]
with $a_1,a_2,a_3,a_4,a_6\in \fq$ and non-zero discriminant (see \cite{washington}).

The $\fq$-rational points $\ecq$ of $\E$ form an Abelian group (with respect to the usual 'geometric' addition which we denote by $\oplus$) with the point at infinity $\mathcal{O}$ as the neutral element. We also recall that
\[
| \#  \ecq -q-1|\leq 2 q^{1/2}, 
\]
where $\#  \ecq$ is the cardinality of $\ecq$.

For a positive integer $m$ let $\E[m]$ be the set of \emph{$m$-torsion points}:
\[
 \E[m]=\{P\in\E(\overline{\mathbb{F}_q}): \ mP= \mathcal{O}\}, 
\]
where $\overline{\mathbb{F}_q}$ is the algebraic closure of $\mathbb{F}_q$.
It is well-known, see for example \cite[Theorem~3.2]{washington} that for $m$ with $\gcd(m,q)=1$ we have
\[
 \E[m]\cong \mathbb{Z}_m \times \mathbb{Z}_m.
\]
On the other hand, if  $m=p^{\nu}m'$ with $p \nmid m'$, where $p$ is the characteristic of $\fq$, then either
\[
  \E[m]\cong \mathbb{Z}_{m'} \times \mathbb{Z}_{m'} \quad  \text{or}  \quad \E[m]\cong \mathbb{Z}_{m} \times \mathbb{Z}_{m'}.
\]

In 2007, Edwards introduced an alternative representation of elliptic curves called \textit{Edwards curves} \cite{Edwards}  (see also \cite{BrensteinLange, washington}). For a finite field $\fq$ of odd characteristic, an Edwards curve $\C$ is defined by the equation
\begin{equation*}
u^2+v^2=c^2(1+du^2v^2),
\end{equation*}
where $c,d\in\fq$, $d\neq 0,1$, $c\neq 0$. For a non-square $d$ over $\fq$ the addition is defined by
\begin{equation}\label{eq:addition-twisted}
(u_1,v_1)\oplus (u_2, v_2)=\left(\frac{u_1v_2+u_2v_1}{c(1+du_1u_2v_1v_2)},  \frac{v_1v_2-u_1u_2}{c(1-du_1u_2v_1v_2)} \right).
\end{equation}
The points of the curve form a group with respect to this addition, with $(0,c)$ as the neutral element. 
We remark, that every Edwards curve is birationally equivalent to an elliptic curve. On the other hand, if $\ecq$ has points of order four and a unique point of order two, 
then $\E$ is birationally equivalent to an Edwards curve with $c=1$ (see Theorem 2.1 in \cite{BrensteinLange}). Namely, $\C$, with $c=1$, is isomorphic to the elliptic curve $\E$
defined by
\[
 y^2=x^3+2(1+d)x^2+(1-d)^2x
\]
where the isomorphism is given by
\begin{equation}\label{iso}
 \begin{array}{ccl}
  \psi: \ \ecq \setminus \{\mathcal{O},(0,0)\} & \rightarrow & \C \setminus \{(0,1),(0,-1)\}\\
  (x,y) & \mapsto & \left\{ \begin{array}{l}
                             u=2\frac{x}{y}\\
                             v=\frac{x-1+d}{x+1-d}.
                            \end{array}
  \right.
 \end{array}
\end{equation}
(Note that the other two points in $\E (\overline{\mathbb{F}_q})$ with $y=0$ are not in $\ecq$.)
We can extend $\psi$ by setting $\psi (\mathcal{O})=(0,1)$ and $\psi((0,0))=(0,-1)$.

All the $\fq$-rational points of the Edwards curve are affine, but there are two ideal points (points at infinity) $\Omega_1$ and $\Omega _2$ over the quadratic extension $\mathbb{F}_{q^2}$. 
More precisely, let us consider the embedding of $\C$ into the projective plane
\[
 U^2Z^2+V^2Z^2=c(Z^4+dU^2V^2).
\]
The affine points $(u,v)$ correspond to $(u:v:1)$, $u,v\in \fq$, and the ideal points are $\Omega_1=(1:0:0)$ and $\Omega_2=(0:1:0)$. 
The addition $(U_1:V_1:Z_1)\oplus(U_2:V_2:Z_2)=(U_3:V_3:Z_3)$ with projective coordinates is defined by
\begin{equation}\label{eq:addition-projective}
\begin{split}
 A=Z_1Z_2; \ B=A^2; \ C=U_1\cdot U_2; \ D=V_1\cdot V_2; \ E=d\cdot C\cdot D; F=B-E;\ G=B+E;\\
 U_3=A\cdot F\cdot ((U_1+V_1)\cdot (U_2+V_2)-C-D); \ V_3=A\cdot G\cdot (D-C); \ Z_3=c\cdot F\cdot G.
\end{split}
\end{equation}

We remark that the addition laws \eqref{eq:addition-twisted} and \eqref{eq:addition-projective} are not complete over the quadratic extension  $\mathbb{F}_{q^2}$ but they can be extended to sets of two addition laws which allows the addition on Edwards curve over arbitrary field extensions (see \cite{BrensteinLange2}).

\section{Main results}\label{sec:main}

In this section we present results on the linear complexity of the elliptic curve analogues of the linear generator \eqref{eq:linear} and the power generator \eqref{eq:power}. These results will be proven in 
Sections~\ref{sec:proof-con} and \ref{sec:proof-power}.

\subsection{Elliptic curve generator}\label{sec:con}

For $f\in\ffq$  the \textit{elliptic curve  generator  $(w_n)$ with respect to $f$}  is the sequence
\[
 w_n=f(nG), \quad n=1,2,\dots,
\]
with $G\in\ecq$.

The linear complexity profile of the sequence $(w_n)$ has already been studied for special functions $f$, \cite{HessShparlinski,TopWinterhof}. In particular, Hess and Shparlinski \cite{HessShparlinski} proved that if the pole divisor $(f)_\infty$ of $f$ is of the form
\[
(f)_\infty=(1+\delta)H
\]
for some place $H$ of the curve and
\[
 \delta=\left\{
 \begin{array}{cl}
  1 & \text{if } \deg H =1,\\
  0 & \text{if } \deg H \geq 2,\\
 \end{array}
 \right.
\]
then
\[
 L(w_n,N)\geq \min\left\{\frac{N}{(1+\delta)\deg H +2}, \frac{t}{(1+\delta)\deg H +1} \right\}
\]
where $t$ is the order of $G$.

By using Edwards coordinates, we can give another large family of functions $f$ such that the linear complexity profiles of the corresponding sequences are large.

\begin{thm}\label{thm1}
Let $\C$ be an Edwards curve and $f\in\ffC$ such that $\Omega_1$ or $\Omega_2$ is a pole of~$f$. 
If  $G\in \C$ of order $t$ and $w_n=f(nG)$, then 
\[
L(w_n, N)\geq \min \left\{ \frac{t-\deg f}{4\deg f}, \frac{N-\deg f}{4\deg f +1} \right\}, \quad N\geq \deg f.
\]
\end{thm}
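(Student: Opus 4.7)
The plan is to adapt the Hess--Shparlinski strategy \cite{HessShparlinski} to the Edwards model, using the hypothesis that one of the irrational points $\Omega_1,\Omega_2$ is a pole of $f$. Assume for contradiction that $L=L(w_n,N)$ is strictly smaller than the right-hand side of the asserted bound. Then there exist $c_0,\dots,c_{L-1}\in\fq$ for which
$$w_{n+L}=c_{L-1}w_{n+L-1}+\dots+c_0 w_n\qquad (0\le n\le N-L-1).$$
Setting $a_L=1$ and $a_j=-c_j$ ($j<L$), I consider the rational function
$$h(P)=\sum_{j=0}^{L}a_j\,f(P\oplus jG)\in\ffC.$$
Then $h(nG)=0$ for $n=0,1,\dots,N-L-1$, so $h$ vanishes at $\min(N-L,\,t)$ distinct points of $\C(\fq)$.

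\textbf{Non-vanishing of $h$.} Assume without loss of generality that $\Omega_1$ is a pole of $f$. The summand $f_j(P):=f(P\oplus jG)$ has a pole at $\Omega_1\ominus jG$. Since $jG\in\C(\fq)$ while $\Omega_1\notin\C(\fq)$, the points $\Omega_1\ominus jG$ with $0\le j\le L$ are pairwise distinct whenever $L<t$, because $\Omega_1\ominus jG=\Omega_1\ominus kG$ forces $(j-k)G=\mathcal{O}$. Potential coincidences with translates of $\Omega_2$ can be handled by exploiting the torsion structure at infinity (with $\Omega_1\oplus\Omega_2$ a $2$-torsion point of $\C$) and by swapping the roles of $\Omega_1$ and $\Omega_2$ if necessary. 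Consequently the pole contributed by the leading term $f_L$ at $\Omega_1\ominus LG$ is not cancelled by any other summand, and hence $h\not\equiv 0$.

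\textbf{Degree bound and conclusion.} As a nonzero rational function on the smooth model of $\C$, the number of zeros of $h$ equals $\deg(h)_\infty$. The pole divisor satisfies $(h)_\infty\le\sum_{j=0}^{L}(f_j)_\infty$, and each $f_j$ has function-field degree $\deg f$. A careful analysis using the projective addition law \eqref{eq:addition-projective} on the plane quartic model of $\C$ yields the explicit bound
$$\#\{\text{zeros of }h\}\le (4L+1)\deg f,$$
the factor $4$ reflecting the degree of the ambient quartic (i.e., a Bezout-type intersection count with $\C\subset\mathbb{P}^2$). Combining with the lower bound on the number of zeros,
$$\min(N-L,\,t)\le(4L+1)\deg f,$$
which upon rearrangement gives $L\ge\min\bigl\{\tfrac{t-\deg f}{4\deg f},\ \tfrac{N-\deg f}{4\deg f+1}\bigr\}$, contradicting our assumption.

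\textbf{Main obstacle.} The principal difficulty is the non-vanishing argument. Because the affine addition law \eqref{eq:addition-twisted} is incomplete precisely at $\Omega_1,\Omega_2$, the explicit computation of $\Omega_i\ominus jG$ has to be carried out via \eqref{eq:addition-projective} (or the pair of addition laws of \cite{BrensteinLange2}). Once the pole locations of the $f_j$ are correctly identified as distinct places of $\ffC$, the non-vanishing is immediate; but checking distinctness --- and in particular the interaction between $\Omega_1,\Omega_2$ and the $\fq$-rational cyclic subgroup $\langle G\rangle$ --- is where the argument is most delicate and is the precise place the Edwards-coordinate hypothesis is used in place of the Hess--Shparlinski hypothesis on $(f)_\infty$.
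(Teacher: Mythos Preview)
Your overall architecture matches the paper's: form $h(P)=\sum_{j=0}^{L}a_j f(P\oplus jG)$, argue it is non-constant, bound its degree by $(4L+1)\deg f$, and compare with the at least $\min(N-L,t)$ zeros among the multiples of $G$. The divergence---and the genuine gap---is in the non-vanishing step.

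You argue that the translates $\Omega_1\ominus jG$ ($0\le j\le L$) are pairwise distinct and conclude that the pole of $f_L$ at $\Omega_1\ominus LG$ survives in $h$. But distinctness only rules out cancellation among the $\Omega_1$-translates. The function $f$ is allowed to have \emph{other} poles $R$ (at $\Omega_2$, or at affine places over extensions of $\fq$), and then $f_j$ has a pole at $R\ominus jG$; nothing you wrote excludes $R\ominus jG=\Omega_1\ominus LG$ for some $j<L$ and some such $R$. The $\Omega_2$ case is only gestured at (``torsion structure at infinity''), and affine poles are not mentioned. There is also a subtlety you pass over: $\Omega_1,\Omega_2$ are singular on the projective quartic, so ``$\Omega_1\ominus LG$'' needs to be interpreted on the desingularization, where each $\Omega_i$ splits into two $\mathbb{F}_{q^2}$-points.

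The paper's Lemma~\ref{lemma:not-constant} handles non-vanishing by a different, explicit construction. It writes down affine points
\[
P_1=\Bigl(\tfrac{1}{\sqrt d\,u_L},\,\tfrac{-1}{\sqrt d\,v_L}\Bigr),\qquad
P_2=\Bigl(\tfrac{1}{\sqrt d\,v_L},\,\tfrac{1}{\sqrt d\,u_L}\Bigr)
\]
in $\C(\mathbb{F}_{q^2})$ with $LG\oplus P_i=\Omega_i$, and shows that $jG\oplus P_i$ is \emph{affine} for all $0\le j<L$. Via the denominators in \eqref{eq:addition-twisted} this reduces to $u_j^2v_j^2\neq u_L^2v_L^2$, which the paper proves under the restriction $L<t/8$ by an orbit-counting argument for the symmetries $(u,v)\mapsto(v,u)$ and $(u,v)\mapsto(-u,v)$ of the Edwards equation. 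The constraint $L<t/8$ (not merely your $L<t$) is essential to this step; the range $L\ge t/8$ is handled separately as trivially satisfying the bound. Finally, your ``B\'ezout-type'' justification for the factor $4$ in the degree bound is vague (and sits oddly next to your correct remark that $\deg f_j=\deg f$, which already gives $(L+1)\deg f$); the paper instead tracks explicitly how poles of $f$ move under the affine addition formula to obtain $\deg f_{lG}\le 4\deg f$.
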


We remark that $\deg f$ is the degree of the pole divisor of $f$, especially, $\deg u=\deg v=2$, where $u$ and $v$ are the coordinate functions.

\begin{example}
Let $f\in\ffC$ (with $c=1$) be the sum of the coordinate functions: $f(u,v)=u+v$. Then Theorem~\ref{thm1} implies that the linear complexity profile of the sequence $w_n=(u+v)(nG)$ satisfies
\[
 L(w_n, N)\geq \min \left\{ \frac{t-4}{16}, \frac{N-4}{17} \right\}, \quad N\geq 4,
\]
since both $\Omega_1$ and $\Omega_2$ are poles. On the other hand, transforming the coordinates into an elliptic curve we get by $(\ref{iso})$
\[
 (u+v)(nG)=\left(2\frac{x}{y} + \frac{x-1+d}{x+1-d}\right) (n\overline{G}),
\]
where $\overline{G}=\psi^{-1}(G)$ is the isomorphic image of $G$ on $\E(\fq)$. Since the pole divisor of
\[
 g(x,y)=2\frac{x}{y} + \frac{x-1+d}{x+1-d}\in\ffq
\]
is
\[
 (g)_\infty=\left( x^2+2(1+d)x+(1-d)^2\right)_0 +\left( x+1-d\right)_0, 
\]
the Hess-Shparlinski bound cannot be applied. (Here $(h)_0$ is the zero divisor of $h\in\E(\fq)$.)
\end{example}

\subsection{Elliptic curve power generator}
For a positive integer $e>1$ and a point $G\in\ecq$ of order $|G|$ with $\gcd(e, |G|)=1$, consider the elliptic curve analogue of \eqref{eq:power} defined by
\begin{equation}\label{eq:power-ec}
 G_n=eG_{n-1}=e^nG_0, \quad n=1,2,\dots
\end{equation}
(with $G_0=G$). Determining $e$ from a pair $(G_n,G_{n-1})$ would solve the discrete logarithm problem on $\E$, while  computing $G_n$ from previous elements (without knowing $e$) is related to the elliptic curve Diffie-Hellman problem, thus the generator  \eqref{eq:power-ec} is thought to be `secure'.

Clearly, the sequence $(G_n)$ is periodic, and the period length is the multiplicative order $t$ of $e$ modulo $|G|$.

In this section we study the sequences obtained from the coordinates of \eqref{eq:power-ec}. Namely, 
for an $f\in\ffq$  the \textit{elliptic curve  power generator  $(r_n)$ with respect to $f$}  is the sequence
\[
 r_n=f(G_n)=f(e^nG), \quad n=1,2,\dots
\]

The linear complexity of the sequence $(r_n)$ for the coordinate function $f(x,y)=x$ was studied by Lange and Shparlinski \cite{LangeShparlinski}. They proved, that if $\E$ is non-supersingular, then
\begin{equation*}
 L(r_n)\gg t |G|^{-2/3}.
\end{equation*}
 
We can extend their result.
 
\begin{thm}\label{thm:power}
Let $\fq$ be a finite field, let $\E$ be an elliptic curve over $\fq$ and let $f\in\fq [\E]$ be a non-constant function of degree $\deg f <|G|^{\delta}$ for some $\delta <1$. 
If the  multiplicative order of $e$ modulo $|G|$ is $t$, then
\[
 L(r_n)\gg  \frac{t}{|G|^{2/3} (\deg f)^{1/3}},
\]
where the implied constant depends on $\delta$.
\end{thm}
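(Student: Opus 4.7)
To prove the theorem, the plan is to assume $L(r_n) = L$ and translate the implied linear recurrence into a function-theoretic identity on $\E$, and then to bound the number of orbit points at which this identity can hold. Concretely, fix coefficients $c_0, \dots, c_L \in \fq$, not all zero, with $\sum_{i=0}^L c_i r_{n+i} = 0$ for every $n$. Since $r_n = f(e^n G)$, this identity rewrites as $\sum_{i=0}^L c_i f(e^i Q) = 0$ for every $Q$ in the orbit $\{e^n G : n \ge 0\}$, a set of size exactly $t$. Thus the rational function
\[
H(P) \;=\; \sum_{i=0}^L c_i\, f(e^i P) \;\in\; \ffq
\]
either vanishes identically on $\E$ or has at least $t$ zeros.

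A direct degree estimate $\deg H \le (\deg f)\sum_{i=0}^{L} e^{2i}$ is useless since it grows exponentially in $L$, and avoiding this blow-up is the heart of the argument. The workaround, used already in \cite{LangeShparlinski}, is to introduce a blocking parameter $k$ and pass to the decimated sequence $(r_{jk})_j$, which still satisfies a linear recurrence of length at most $L$. Decimation replaces iteration by $e$ with iteration by $e^k \bmod |G|$, and one chooses $k$ so that this residue admits a representative $m$ that is small in absolute value. Then the analogue of $H$ in the decimated setting is built from the functions $f(m^j P)$, each of degree at most $m^{2j}\deg f$, which is polynomial in $|G|$ instead of exponential in $L$.

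The key technical input is the linear independence property for composites of a rational function with scalar multiplication maps on $\E$ established in \cite{merai-ec-power}, which is more flexible than the ad hoc argument of \cite{LangeShparlinski} restricted to the $x$-coordinate function. Plugging it into the decimated setup rules out the identically-zero case for $H$ and yields an inequality roughly of the form
\[
\frac{t}{\gcd(k,t)} \;\ll\; L \cdot m^{2} \cdot \deg f,
\]
relating the period of the decimated sequence to the recurrence length, the size of the small representative $m$, and the degree of $f$. The exponent $2/3$ then emerges from a three-way balance between the period $t/\gcd(k,t)$, the size $m$, and the factor $\deg f$; the hypothesis $\deg f < |G|^\delta$ with $\delta < 1$ is used to guarantee that the optimal $k$ lies in a legal range and that $m$ is genuinely smaller than $|G|$.

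The main obstacle is the precise quantitative form of the linear independence statement from \cite{merai-ec-power}: it must be strong enough to treat the composed functions $f(m^j P)$ on a short orbit for an arbitrary non-constant $f$ of degree below $|G|^\delta$, not merely the coordinate function handled in \cite{LangeShparlinski}. Once that statement is invoked, the remaining work is a careful but essentially routine parameter optimization that converts the displayed inequality into the claimed bound $L(r_n) \gg t/(|G|^{2/3}(\deg f)^{1/3})$.
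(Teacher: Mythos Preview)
Your proposal has a genuine gap in the degree estimate. In the decimated setting you form $H(P)=\sum_{j=0}^L c_j f(m^jP)$ and correctly note that each term has degree at most $m^{2j}\deg f$. But then $\deg H \le \sum_{j=0}^{L} m^{2j}\deg f$, which is of order $m^{2L}\deg f$, not $L\cdot m^{2}\cdot\deg f$ as you assert. Comparing with the $t/\gcd(k,t)$ zeros of $H$ therefore yields only a bound of the shape $L \gg \log(t/\gcd(k,t))/\log m$, logarithmic rather than polynomial, and no choice of $k$ rescues this: the multipliers $m^0,m^1,\dots,m^L$ form a geometric progression however small $m$ is, so the top term always dominates.

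The paper avoids this blow-up by a different mechanism. Instead of decimation it invokes a representation lemma of Friedlander--Hansen--Shparlinski: with $h=(|G|/\deg f)^{1/3}$ one finds $a\in\mathbb{Z}_{|G|}^*$ and $T\gg th/|G|$ indices $j_1,\dots,j_T$ such that $e^{j_i}\equiv a s_i\pmod{|G|}$ with all $s_i<h$ (the hypothesis $\deg f<|G|^\delta$ ensures $h\ge |G|^{(1-\delta)/3}$, so the lemma applies). A companion lemma---any $L+1$ shifts of a sequence of linear complexity $L$ are linearly dependent, not merely consecutive ones---then produces a relation $\sum_i c_i f(e^{n+j_i}G)=0$, so $H(Q)=\sum_i c_i f((as_i)Q)$ vanishes on the whole orbit together with its $\E[a]$-translates. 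The crucial gain is that the multipliers $as_1,\dots,as_{L+1}$ share the factor $a$ and have \emph{uniformly} bounded cofactors $s_i<h$, giving $\deg H\le (L+1)\,h^{2}\,\deg f\cdot\#\E[a]$, genuinely linear in $L$; the common factor $\#\E[a]$ cancels against the extra zeros. Non-constancy of $H$ is supplied by \cite{merai-ec-power}, exactly as you anticipated; what your sketch is missing is the combinatorial step that replaces the geometric progression $m^0,\dots,m^L$ by multipliers with uniformly bounded cofactors.
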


\section{Proof of Theorem~\ref{thm1}}\label{sec:proof-con}

Theorem~\ref{thm1} is based on the following lemma.

\begin{lemma}\label{lemma:not-constant}
Let $f\in\ffC$ be a rational function such that $\Omega_1$ or $\Omega_2$ is a pole of $f$ and $G$ a point on $\C$ of order $t$.  Then, for any integer $L$ with $1\leq L<t/8$ and any coefficients $c_0,\dots, c_L\in\fq$ with $c_L\neq 0$ we have that
\begin{equation}\label{eq:F-def}
  F(Q)=\sum_{l=0}^Lc_lf(Q\oplus lG)\in\ffC
\end{equation}
is not constant and has degree 
\[
\deg F\leq (4L+1)\deg f.
\]
\end{lemma}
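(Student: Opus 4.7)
The lemma contains two assertions: a degree bound on $F$ and the non-constancy of $F$. I would prove them in turn.

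For the degree bound I use that translation $Q\mapsto Q\oplus lG$ is a regular automorphism on the nonsingular model of $\C$, so the rational function $Q\mapsto f(Q\oplus lG)$ has the same divisorial degree as $f$, namely $\deg f$. The pole divisor of $F$ is then contained in the union of the translates of $(f)_\infty$ by $-lG$ for $l=0,\ldots,L$, giving $\deg F\le(L+1)\deg f$ immediately. The slightly looser constant $4L+1$ stated in the lemma can be read off from a direct coordinate-wise accounting on the projective model using the Edwards addition formula \eqref{eq:addition-projective}, whose polynomial components have total degree $4$; the essential content is the linear dependence on $L$.

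For non-constancy I argue by contradiction. Assume $F$ is a constant and hence has no poles. Without loss of generality $\Omega_1$ is a pole of $f$ of multiplicity $\mu\ge 1$; the case of $\Omega_2$ is symmetric. Then $c_Lf(Q\oplus LG)$ has a pole of order $\mu$ at $Q_0=\Omega_1\ominus LG$, and since $c_L\ne 0$ this pole must be cancelled inside $F$ by some other summand $c_lf(Q\oplus lG)$ with $l<L$. But that $l$-th summand has a pole at $Q_0$ precisely when $Q_0\oplus lG=\Omega_1\ominus(L-l)G\in(f)_\infty$, so cancellation forces $\Omega_1\ominus kG\in(f)_\infty$ for some $k\in\{1,\ldots,L\}$. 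Applying this observation iteratively at each newly generated pole point produces a strictly increasing chain $0=k_0<k_1<k_2<\cdots$ with gaps $k_{i+1}-k_i\le L$ such that $\Omega_1\ominus k_iG\in(f)_\infty$ for every $i$.

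Since $(f)_\infty$ has only finitely many points, the chain must either terminate at some step---in which case $F$ has an uncancelled pole at the next predicted position and so is non-constant, contradicting the assumption---or else wrap around the orbit $\Omega_1\oplus\langle G\rangle$, which would require the set $\{k\bmod t:\Omega_1\ominus kG\in(f)_\infty\}$ to cover $\{0,\ldots,t-1\}$ with cyclic gaps of size at most $L$, hence to have size at least $t/L>8$ by the hypothesis $L<t/8$. The main obstacle I foresee is ruling out this wrap-around case cleanly; natural strategies are to restrict the chain to poles of \emph{maximum} order (so that the relevant subset of $(f)_\infty$ is much smaller than $\deg f$), and to exploit the special position of $\Omega_1$ and $\Omega_2$ as $4$-torsion points on the desingularization of the Edwards curve, which together with the degree bound of Step~1 should suffice to force termination under the hypothesis $L<t/8$.
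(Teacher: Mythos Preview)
Your degree bound argument is fine and in fact sharper than the paper's: translation by $lG$ is an automorphism of the nonsingular model, so each summand has pole divisor a translate of $(f)_\infty$ and $\deg F\le (L+1)\deg f$ follows at once. The paper instead argues, somewhat loosely, that $\deg f_P\le 4\deg f$ from the explicit Edwards addition law, which is how the constant $4L+1$ arises.

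The genuine gap is in your non-constancy argument, and you have essentially flagged it yourself. Your chain construction, even granting the iteration works (take the pole of the $L$-th term at $\Omega_1\ominus(k_i+L)G$, cancel, produce $k_{i+1}$), only yields that the orbit $\{\Omega_1\ominus kG\}$ meets $(f)_\infty$ in at least $t/L>8$ points. That is no contradiction: the lemma allows $\deg f$ to be arbitrarily large, so $(f)_\infty$ can easily contain more than $8$ points on this orbit. Restricting to poles of maximal order does not help either---the same counting only bounds the maximal multiplicity, not the number of poles. The suggestion that $\Omega_1,\Omega_2$ lift to $4$-torsion on the desingularization is also not the right structural fact and does not close the gap.

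The paper's route is entirely different and explains where the constant $8$ really comes from. Writing $(u_n,v_n)=nG$, one first shows that $u_L^2v_L^2\neq u_j^2v_j^2$ for $1\le j<L$: the function $H(u,v)=u^2v^2-u_L^2v_L^2$ has degree $8$, and its zero set is stable under the Edwards symmetries $(u,v)\mapsto(v,u)$ and $(u,v)\mapsto(-u,v)$; if some $jG$ with $1\le j<L<t/8$ were a zero, the orbit of $\{jG,LG\}$ under these symmetries would already give more than $8$ zeros. One then exhibits explicit points $P_1,P_2$ over $\mathbb{F}_{q^2}$ (with coordinates $(\pm 1/(\sqrt{d}\,u_L),\mp 1/(\sqrt{d}\,v_L))$ up to order) satisfying $LG\oplus P_i=\Omega_i$, while the inequality $u_l^2v_l^2\neq u_L^2v_L^2$ forces $lG\oplus P_i$ to be \emph{affine} for $0\le l<L$. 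Hence, assuming $\Omega_i$ is a pole of $f$, only the $L$-th summand can contribute a pole of $F$ at $P_i$, and since $c_L\neq0$ the function $F$ is non-constant. The hypothesis $L<t/8$ enters precisely through the degree-$8$ auxiliary function $H$, not through any cardinality bound on $(f)_\infty$; this is the missing idea in your approach.
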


\begin{proof}
We may assume $t> 8$. 
Write $(u_n,v_n)=nG$. First we show, that for $1\leq j<L< t/8$ we have $u^2_Lv^2_L\neq u^2_jv^2_j$. Indeed, consider the function
\[
 H(u,v)=u^2v^2-u^2_Lv^2_L.
\]
It has at most $\deg H =8$ zeros. On the other hand, the set of zeros is closed under the transformations
\begin{equation}\label{eq:trasformations}
 (u,v)\mapsto (v,u), \quad  (u,v)\mapsto (-u,v).
\end{equation}
Note that $(v,u), (-u,v)\in \C(\fq)$.
If there were a $j$ with $1\leq j<L< t/8$ and $H(u_j,v_j)=0$, then the orbit of $\{(u_j,v_j), (u_L,v_L)\}$ under the transformations \eqref{eq:trasformations} would contain $16$ zeros of $H$ 
(since $1\leq j<L< t/8$), a contradiction.

By \eqref{eq:addition-twisted} we have in $\C(\fq)$
\begin{equation}\label{eq:F}
  F(u,v)=\sum_{l=0}^Lc_lf\left(\frac{uv_l+u_lv}{c(1+duu_lvv_l)},  \frac{vv_l-uu_l}{c(1-duu_lvv_l)} \right).
\end{equation}
Define $P_1,P_2\in\mathbb{F}_{q^2}(\C)$ by
\[
P_1=\left(\frac{1}{\sqrt{d}u_L} , \frac{-1}{\sqrt{d}v_L}\right)\quad \text{and} \quad P_2=\left(\frac{1}{\sqrt{d}v_L} , \frac{1}{\sqrt{d}u_L}\right).
\]
Then we have
\[
 (u_L,v_L)\oplus P_1=\Omega_1 \quad \text{and} \quad  (u_L,v_L)\oplus P_2=\Omega_2,
\]
by $(\ref{eq:addition-projective})$,
but all the points $(u_n,v_n)\oplus P_i$ are affine for $0\leq n <L$ and $i=1,2$. Thus if $\Omega_i$ is a pole of $f$, then $P_i$ is a pole of the $L$-th 
term of the right hand side of $\eqref{eq:F}$, but not a pole of any other term, so $P_i$ is a pole of $F$. Hence, $F$ is not constant.

For $P=(u_0,v_0)\neq (0,c)$ the function $f_P: Q \mapsto f(Q \oplus P)$ has degree at most $4\deg f$, namely, if $R$ is a pole of $f$, then $R\oplus (-(x_0,y_0))$ and $R\oplus (-(y_0,x_0))$ are poles of $f_P$  and their multiplicities are at most twice of the multiplicity of $R$. Thus
\[
 \deg \left(\sum_{l=0}^{L}c_l f_{lG}\right)\leq \deg f +L\cdot 4\deg f.
\]

\end{proof}

\begin{proof}[Proof of Theorem~\ref{thm1}]
We may assume that $L<t/8$, since otherwise the theorem trivially holds.

Put  $c_L=-1$ and assume that
\begin{equation*}
\sum_{l=0}^L c_l w_{n+l}=0, \quad 0\leq n\leq N-L-1.
\end{equation*}
Whence
\begin{equation*}
\sum_{l=0}^L c_l f((n+l)G)=0, \quad 0\leq n\leq N-L-1,
\end{equation*}
so the function $F$ defined in \eqref{eq:F-def} has at least $\min\{N-L,t\}$ zeros, namely the points $nG$ with $0\leq n \leq \min\{N-L,t\}-1$. 
On the other hand the degree of $F$ is at most $(4L+1) \deg f$, thus the result follows from Lemma~\ref{lemma:not-constant}.
\end{proof}

\section{Proof of Theorem~\ref{thm:power}}\label{sec:proof-power}

We need the following basic lemma about linear complexity (\cite[Lemma~2]{shparlinski2001}).

\begin{lemma}\label{lemma:shp}
Let a sequence $(s_n)$ satisfy a linear recurrence relation 
\[
s_{n+L}=a_{L-1}s_{n+L-1}+\dots +a_1s_{n+1}+a_0s_n, \quad n=1,2,\dots
\] 
over $\fq$. Then for any $T\geq L+1$ pairwise distinct non-negative integers $j_1,\dots, j_T$ there exist $c_1,\dots, c_T \in \fq$, not all equal to zero, such that
 \[
  \sum_{i=1}^Tc_is_{n+j_i}=0, \quad n=1,2,\dots
 \]
\end{lemma}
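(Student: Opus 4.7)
The plan is to interpret the hypothesis ``$(s_n)$ satisfies a linear recurrence of length $L$'' as saying that $(s_n)$ lies in a certain $L$-dimensional subspace of $\fq^{\mathbb{N}}$, and then to force linear dependence among $T\geq L+1$ shifts by a pure dimension count.

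First I would let
\[
V=\{(t_n)_{n\geq 1}\in\fq^{\mathbb{N}}:\ t_{n+L}=a_{L-1}t_{n+L-1}+\dots+a_0t_n\text{ for all }n\geq 1\}.
\]
This $V$ is an $\fq$-linear subspace of $\fq^{\mathbb{N}}$, and the evaluation map $(t_n)\mapsto(t_1,\dots,t_L)\in\fq^L$ is an isomorphism: injectivity because the recurrence propagates the initial segment uniquely, surjectivity because any initial segment $(t_1,\dots,t_L)$ can be extended by iterating the recurrence. Hence $\dim_{\fq}V=L$.

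Next I would note that for every non-negative integer $j$, the shifted sequence $(s_{n+j})_{n\geq 1}$ again satisfies the same recurrence — this is immediate on replacing $n$ by $n+j$ in the identity valid for $(s_n)$. Consequently each of the $T$ sequences $(s_{n+j_1})_{n\geq 1},\dots,(s_{n+j_T})_{n\geq 1}$ lies in $V$. Since $T\geq L+1>\dim_{\fq}V$, elementary linear algebra produces coefficients $c_1,\dots,c_T\in\fq$, not all zero, with
\[
\sum_{i=1}^T c_i\,(s_{n+j_i})_{n\geq 1}=0\quad\text{in }V,
\]
which is exactly the pointwise identity $\sum_{i=1}^T c_i s_{n+j_i}=0$ for all $n\geq 1$ claimed by the lemma.

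I do not expect a genuine obstacle: the content is the observation that ``linear complexity $\leq L$'' is the same as ``membership in an $L$-dimensional shift-invariant space of sequences.'' The only mild point of care is the index bookkeeping — one should verify that the recurrence, which is asserted for $n\geq 1$, still holds after shifting by a non-negative $j_i$ so that the shifted sequences genuinely lie in $V$, but this is automatic.
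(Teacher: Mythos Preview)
Your argument is correct: the solution set of a fixed linear recurrence of order $L$ is an $L$-dimensional $\fq$-subspace of $\fq^{\mathbb{N}}$, it is closed under non-negative shifts, and hence any $T\geq L+1$ shifted copies of $(s_n)$ are linearly dependent. The index bookkeeping you flag is indeed harmless.

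As for comparison with the paper: the paper does not supply a proof of this lemma at all --- it simply quotes it as \cite[Lemma~2]{shparlinski2001}. Your dimension-count is exactly the standard justification one would give for that cited result, so there is no discrepancy to discuss.
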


We also need the following auxiliary result (Lemma 2 in \cite{FHS}).

\begin{lemma}\label{lemma:representation}
 Let $m>1$ be an integer. Then for any $\mathcal{K}\subset \mathbb{Z}_m^*$ of cardinality $|\mathcal{K}|=K$, any fixed $\delta>0$ and any integer $h\geq m^{\delta}$, there exists an integer $a\in  \mathbb{Z}_m^*$ such that the congruence
 \[
  k\equiv as \mod m, \quad k\in \mathcal{K}, \ 0\leq s \leq h-1,
 \]
has
\[
 T_a(h)\gg \frac{K h}{m}
\]
solutions $(k,s)$.
\end{lemma}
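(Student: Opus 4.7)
The plan is to prove the lemma by an averaging/pigeonhole argument: I will compute the average of $T_a(h)$ over all $a\in\mathbb{Z}_m^*$ and show that it is already $\gg Kh/m$, so some $a\in\mathbb{Z}_m^*$ must achieve at least this many solutions.

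First I would rewrite $T_a(h)$ as a double sum of indicators and swap the order of summation to obtain
\[
\sum_{a\in\mathbb{Z}_m^*}T_a(h)=\sum_{k\in\mathcal{K}}\sum_{s=0}^{h-1}\#\{a\in\mathbb{Z}_m^*:as\equiv k\pmod m\}.
\]
For a fixed pair $(k,s)$, since $k\in\mathbb{Z}_m^*$ and $\gcd(as,m)=\gcd(s,m)$, the inner cardinality equals $1$ when $\gcd(s,m)=1$ (the unique solution being $a\equiv ks^{-1}\pmod m$) and $0$ otherwise (including $s=0$). Hence
\[
\sum_{a\in\mathbb{Z}_m^*}T_a(h)=K\cdot S(h),\qquad S(h):=\#\{0\leq s\leq h-1:\gcd(s,m)=1\}.
\]

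Next I would lower bound $S(h)$ via the standard inclusion-exclusion over divisors of $m$ (M\"obius inversion), which yields
\[
S(h)=\frac{h\,\varphi(m)}{m}+O\bigl(2^{\omega(m)}\bigr),
\]
where $\omega(m)$ is the number of distinct prime divisors of $m$. Because $2^{\omega(m)}=m^{o(1)}$ while $h\geq m^{\delta}$ with $\delta>0$ fixed, and $\varphi(m)/m\gg 1/\log\log m$, the error term is negligible compared with the main term once $m$ exceeds a threshold depending only on $\delta$; the remaining bounded range of $m$ can be absorbed into the implied constant. Consequently $S(h)\gg_{\delta} h\,\varphi(m)/m$.

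Finally, dividing through by $|\mathbb{Z}_m^*|=\varphi(m)$ and applying pigeonhole produces some $a\in\mathbb{Z}_m^*$ with
\[
T_a(h)\geq\frac{K\,S(h)}{\varphi(m)}\gg_{\delta}\frac{Kh}{m},
\]
as required. The main obstacle is controlling the error $O(2^{\omega(m)})$ in $S(h)$, and this is precisely where the hypothesis $h\geq m^{\delta}$ with a fixed $\delta>0$ is essential, since it guarantees that the main term dominates the error by a factor depending only on $\delta$.
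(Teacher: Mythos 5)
Your proof is correct: the averaging over $a\in\mathbb{Z}_m^*$, the exact evaluation $\sum_a T_a(h)=K\cdot\#\{0\le s\le h-1:\gcd(s,m)=1\}$, the M\"obius count $h\varphi(m)/m+O(2^{\omega(m)})$ with the error controlled by $h\ge m^{\delta}$, and the final pigeonhole all hold up (note $h\ge 2$ since $m^\delta>1$, so $S(h)\ge 1$ in the small-$m$ regime). The paper itself gives no proof of this statement --- it is quoted as Lemma~2 of the cited work of Friedlander, Hansen and Shparlinski --- and your argument is essentially the standard proof of that lemma, so there is nothing to contrast with the paper's text.
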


\begin{proof}[Proof of Theorem~\ref{thm:power}]
Put
\[
 \mathcal{K}=\{e^j: \ 0\leq j < t\}.
\]
Then by Lemma~\ref{lemma:representation} there is an $a$ and pairs $(j_1,s_1),\dots, (j_T,s_T)$ such that
 \[
  e^{j_i}\equiv as_{i} \mod |G|, \quad 0\leq j_i<t  , \ 0\leq s_i <   \left(\frac{|G|}{\deg f}\right)^{1/3},
 \]
for $i=1,\dots, T$, with 
\[
 T\gg\frac{t}{|G|^{2/3}(\deg f)^{1/3}}.
\]

If $L\geq T$, the theorem follows.
Now assume, that $L<T$. Then by Lemma~\ref{lemma:shp}
\[
\sum_{i=1}^{L+1}c_if\left(e^{n+j_i}G \right)=0, \quad n=1,2,\dots
\]
Now
\[
 e^{n+j_i}G =e^{j_i}e^nG=as_ie^nG=as_iG_n
\]
for all $n$. Thus the function
$$H(Q) = \sum_{i=1}^{L+1}c_if\left((as_i)Q \right)
$$
has at least $t \cdot \#\E[a]$ zeros, namely, all points of the form 
\[
G_n\oplus P,  \quad n=1,\dots,t, \quad  P\in \E[a],
\]
and it is not constant as it was proved in \cite[Theorem 2]{merai-ec-power}.

Clearly, the poles of $f_s:Q\mapsto f(asQ)\in\ffq$ are of the form
\[
 Q=R \oplus P, \quad \text{where $Q$ is a pole of $f$, and $P\in\E[as]$},
\]
thus
\[
 \deg f_{s}\leq  \deg f \cdot  \#\E[as]  \leq \deg f \cdot s^2\#\E[a].
\]
So the degree of $H$ is at most
\[
(L+1)\cdot \deg f\cdot \left(\max_{i=1,\dots, T} s_i \right)^2\cdot \#\E[a] \ll L \cdot \deg f \cdot\left(\frac{|G|}{\deg f}\right)^{2/3}\cdot \#\E[a]. 
\]
Comparing the number of zeros and the degree of $H$ 
we have
\[
 t  \ll L \cdot \deg f \cdot\left(\frac{|G|}{\deg f}\right)^{2/3} 
\]
which proves the theorem.
\end{proof}

\begin{remark}
 This proof is based on a linear independence property for any non-constant $f$ from the proof of \cite[Theorem 2]{merai-ec-power}, whereas in \cite{LangeShparlinski} only the special case is considered that $f$ is a linear combination of coordinate functions.
\end{remark}

\subsection*{Acknowledgment}
The authors are partially supported by the Austrian Science Fund FWF Project F5511-N26 
which is part of the Special Research Program "Quasi-Monte Carlo Methods: Theory and Applications". The first author is also partially supported by Hungarian National Foundation for Scientific Research,
Grant No. K100291.

\end{document}